\documentclass[a4paper, 12pt]{article}

\usepackage[sort&compress]{natbib}
\bibpunct{(}{)}{;}{a}{}{,} 

\usepackage{amsthm, amsmath, amssymb, mathrsfs, multirow, url, subfigure}
\usepackage{graphicx} 
\usepackage{ifthen} 
\usepackage{amsfonts}
\usepackage[usenames]{color}
\usepackage{fullpage}
\usepackage{tikz}

\theoremstyle{plain}

\newtheorem{prop}{Proposition}

\theoremstyle{definition}

\theoremstyle{remark}

\newcommand{\prob}{\mathsf{P}} 
\newcommand{\E}{\mathsf{E}}

\newcommand{\bin}{{\sf Bin}}
\newcommand{\unif}{{\sf Unif}}

\newcommand{\bet}{{\sf Beta}}

\newcommand{\TT}{\mathbb{T}}

\renewcommand{\H}{\mathcal{H}}

\newcommand{\prior}{\mathsf{Q}}

\newcommand{\lPi}{\rotatebox[origin=c]{180}{$\mathsf{\Pi}$}} 
\newcommand{\uPi}{\mathsf{\Pi}}

\title{Induction and the rule of succession through a possibilistic inferential model lens}
\author{Ryan Martin\footnote{Department of Statistics, Purdue University, {\tt martinrg@purdue.edu}} \quad Shih-Ni Prim\footnote{Department of Statistical Sciences, Wake Forest University, {\tt prims@wfu.edu}} \quad Max Raner\footnote{Department of Mathematics, Uppsala University, {\tt max.raner@math.uu.se}} \quad Jonathan P. Williams\footnote{Department of Statistics, North Carolina State Univeristy, {\tt jwilli27@ncsu.edu}}}
\date{\today}

\begin{document}

\maketitle 

\begin{abstract}
Induction is the process by which empirical evidence is transformed to knowledge.  Hume famously argued---and Popper and others agree---that there can be no logical justification for induction.  A weaker form of induction, due to Bayes, expresses the aforementioned knowledge in terms of probabilities, and we review some well-known and not-so-well-known criticisms of the Bayesian solution.  We then investigate the relatively new possibilistic inferential model (IM) framework, showing that, in addition to the IM's strong, statistical reliability guarantees that it uniquely enjoys, it is safe from those criticisms that damage the Bayesian foundations.  For illustration, we reconsider the classical sunrise problem and compare our proposed solution with Laplace's famous rule of succession. 

\smallskip

\emph{Keywords and phrases:} Bayesian; Hume; Popper; possibility theory; validity
\end{abstract}

\section{Introduction}
\label{S:intro}

Statistical inference aims to draw conclusions about relevant unknowns based on empirical data.  On what grounds, however, can knowledge about the world or about the future be created based on experience alone?  This is the fundamental question asked by \citet{hume.treatise, hume.enquiry} in {\em A Treatise of Human Nature} and {\em An Enquiry Concerning Human Understanding}, respectively, arguably two of the most significant works of Western philosophy.  As \citet{sep.induction} summarizes,
\begin{quote}
Hume asks on what grounds we come to our beliefs about the unobserved on the basis of inductive inferences. He presents an argument in the form of a dilemma which appears to rule out the possibility of any reasoning from the premises to the conclusion of an inductive inference... 
Therefore, for Hume, the problem remains of how to explain why we form any conclusions that go beyond the past instances of which we have had experience. 
Hume stresses that he is not disputing that we do draw such inferences. The challenge, as he sees it, is to understand the ``foundation'' of the inference---the ``logic'' or ``process of argument'' that it is based upon. 
The problem of meeting this challenge, while evading Hume's argument against the possibility of doing so, has become known as ``the problem of induction.''
\end{quote}
Learning from experience is a cornerstone of both scientific progress and our everyday lives, so the problem of induction is critical.  Indeed, according to Bertrand Russell, if Hume's problem of induction has no solution, then ``there is no intellectual difference between sanity and insanity'' \citep[][p.~699]{russell.history}.  Beyond philosophy, these considerations have practical consequences since, according to Fisher \citep[quoted in][p.~448]{fisher.bio}, statisticians' day-to-day business is inductive inference:
\begin{quote}
For everyone who does habitually attempt the difficult task of making sense of figures is, in fact, assaying a logical process of the kind we call induction, in that he is attempting to draw inferences from the particular to the general, from the sample to the population. 
\end{quote}
Whether it is Hume's purely logical notions of induction or Fisher's practical and statistical notions, an important point is that, in a vacuum, evidence is meaningless.  So, we must always have a meaning-establishing context in which we are evaluating evidence.  In the practical cases that we have in mind here, that context is provided by some sort of model describing the observable phenomena.  Then, when we speak of ``induction,'' we have in mind learning about the unknown parameters that fully determine the posited model.  This is the step that our investigation is focused on.  Surely, however, that model is not a ``correct'' description of reality, so the corresponding inferences drawn are only meaningful within the context of the model.  There is, consequently, a second step in which the model is compared against reality and, while many of the same considerations go into making this assessment, we will not discuss this second step here.

The most widely known proposed solution to the problem of induction is that of \citet{bayes1763}, wherein the probabilities of outcomes given theories---as determined by a context-establishing model as discussed above---are transformed into probabilities of theories given an observed outcome.  In fact, some have argued that Bayes's essay was a direct response to Hume's challenge to solve the problem of induction; for example, \citet[Sec.~4]{zabell1989.rule} traces evidence that suggests Bayes already had his solution by around 1749, only a year after publication of Hume's {\em Enquiry}.  Beyond Bayes's contribution, major theoretical and conceptual advancements were later made by \citet{laplace, laplace.essai}, \citet{definetti1937, definetti.vol1, definetti.vol2}, and others.  Carrying out the Bayesian program when prior information is lacking has been called ``the most important unresolved problem in statistical inference'' \citep{efron.cd.discuss} and, while the solutions championed by \citet{jeffreys1946} and by Berger \citep[e.g.,][]{berger.objective.book} are now quite common in applications, there are still a number of challenges concerning the interpretation of the posterior probabilities and their inductive inference content.  We review these issues in Section~\ref{S:induction}.  

The Bayesian solution to the problem of induction is not the only solution, however.  Another famous solution was advanced by \citet{popper1959}, one that flipped Hume's problem on its head and laid the foundation for what is now known as {\em critical rationalism}.  That is, Popper agrees with Hume that there is no logical justification for induction, but that the growth of scientific knowledge proceeds by creatively formulating bold theories and subjecting those theories to severe tests.  This is based on the principle that a single counterexample is sufficient to disprove a conjecture.  More specifically, according to critical rationalists like Popper as well as \citet{deutsch.fabric, deutsch.infinity}, 
\begin{quote}
... The fate of a theory, its acceptance or rejection, is decided by observation and experiments---by the result of tests.  So long as a theory stands up to the severest of tests we can design, it is accepted; if it does not, it is rejected. But it is never inferred, in any sense, from the empirical evidence.  There is neither psychological nor logical induction.  {\em Only the falsity of the theory can be inferred from empirical evidence, and this inference is a purely deductive one} \citep[][p.~72]{popper1962}.
\end{quote} 
Statisticians will recognize connections to Neyman's frequentism \citep[e.g.,][]{neymanpearson1933, neyman1955} and the perspective of \citet{mayo.book.1996, mayo.book.2018}.  But there is a very wide gap between the Bayesians' fully conditional probabilistic uncertainty quantification and the Neymanians' unconditional procedures that can only suggest if a hypothesis should be rejected or (tentatively) accepted---no uncertainty quantification whatsoever.  We agree with \citet[][p.~122]{hacking.logic.book} when he says that ``Statisticians want numerical measures of the degree to which data support hypotheses,'' and this explains the many attempts, most famously by Fisher, to establish some sort of middle-ground between the Bayesian and frequentist extremes, a ``{\em via media}'' as \citet[][p.~269]{basu1975} puts it.  

The relatively new {\em inferential model} (IM) solution \citep[e.g.,][]{imbasics, imbook} differs markedly from previous attempts in that it quantifies uncertainty but not in terms of ordinary or precise probabilities.  Instead, IMs quantify uncertainty using imprecise probabilities in the general spirit of \citet{levi1980}, \citet{walley1991}, and \citet{imprecise.prob.book}.  But the theory of imprecise-probabilistic inference in the above references is mostly a generalization of Bayesian inference, not so much a middle-ground between frequentists and Bayesians.  In contrast, IMs' output is calibrated in such a way that, among other things, procedures derived from this output are provably reliable in a frequentist sense.  This requires a fundamentally different mathematical framework, and we will focus in Section~\ref{S:im} below on the possibilistic IM formulation recently reviewed in \citet{imreview}.  As we show in Section~\ref{S:im}, in addition to IMs' desirable statistical properties---i.e., performance as a function of data---there are desirable structural, conceptual, and philosophical properties as well.  These properties concern the behavior of IMs' uncertainty quantification as a function of hypotheses about the unknowns when data is treated as fixed.  This investigation reveals that the criticisms surveyed in Section~\ref{S:induction} against Bayesian uncertainty quantification do not apply to the IMs' possibilistic uncertainty quantification.  Moreover, we show that the {\em maxitivity} property of possibility measures offers the user protection from mistaking compatibility of hypotheses with observed evidence for empirical support of those hypotheses.  The point is that one can---intentionally or unintentionally---construct a disjunction of hypotheses that are consistent with the observed evidence and get high Bayesian posterior probability and, therefore, high confidence in the truthfulness of that hypotheses thanks to additivity alone.  This is the take-away message of the {\em false confidence theorem} of \citet{balch.martin.ferson.2017}.  But the maxitivity of the possibilistic IM's uncertainty quantification offers protection from this: only ``genuine support'' (in the sense defined in Proposition~\ref{prop:monotone}), which is much stronger than hypothesis--evidence compatibility, has the power even to make the user's degree of belief in the hypothesis positive given the evidence.   

An important benchmark test case of any framework of data-driven inference is the famous problem introduced by Hume and taken up specifically by (Bayes and) Laplace; namely, that of observing all successes in a sequence of independent Bernoulli trials.  The now-classical solution advanced by Laplace is what we now would describe as the Bayesian posterior mean for the success probability with respect to a uniform prior on the unknown success probability.  That formula  was later referred to as the {\em rule of succession} by \citet[][Ch.~8]{venn.chance} and discussed---sometimes critically---by various authors, including \citet{fisher1973}.  We apply the possibilistic IM machinery in Section~\ref{S:succession}, with a recent decision-theoretic twist, to develop a new solution to this problem, which we argue is superior to maximum likelihood and the Bayesian solutions of Laplace and of Jeffreys.

\section{The problem of induction}
\label{S:induction}

As stated in Section~\ref{S:intro}, the problem of induction concerns offering a logical justification for drawing conclusions about the world and/or the future based on experience alone.  Speaking in the context of inferring cause from observed effects, which is intimately tied to the problem of induction, \citet[][Sec.~IV, Part~II]{hume.enquiry} explains the problem and challenges those with a solution to produce it: 
\begin{quote}
The bread, which I formerly eat, nourished me; this is, a body of such sensible qualities, was, at that time, endued with such secret powers: But does it follow, that other bread must also nourish me at another time, and that like sensible qualities must always be attended with like secret powers? The consequence seems nowise necessary.  At least, it must be acknowledged, that there is here a consequence drawn by the mind; that there is a certain step taken; a process of thought, and an inference, which wants to be explained.  These two propositions are far from being the same, {\em I have found that such an object has always been attended with such an effect}, and {\em I foresee, that other objects, which are, in appearance, similar, will be attended with similar effects}. I shall allow, if you please, that the one proposition may justly be inferred from the other: I know in fact, that it always is inferred.  But if you insist, that the inference is made by a chain of reasoning, I desire you to produce that reasoning.  The connexion between these propositions is not intuitive.  There is a required medium, which may enable the mind to draw such an inference, if indeed it be drawn by reasoning and argument.  What that medium is, I must confess, passes my comprehension; and it is incumbent on those to produce it, who assert, that it really exists, and is the origin of all our conclusions concerning matter of fact.
\end{quote}
In the above passage, Hume is seeking a logical argument by which {\em certain} inferences about the world can be made based on finite experiences.  It is worth mentioning, however, that Hume is open to probabilistic solutions, where uncertainty about the inferences drawn is quantified via probabilities, though he expresses similar doubts about whether a satisfactory solution on these lines can be offered.  Of the many who have taken up Hume's challenge to solve the problem of induction, Bayes's solution via conditional probability is the most famous and most familiar to statisticians.  This is the kind of solution to the problem of induction that we will focus on in the present paper.  To express the relevant nuance and to cover the broadest of scopes, we formulate the details here using propositional logic/calculus, as in \citet[][Sec.~1.2]{jeffreys1961}.

A {\em proposition} $q$ is a statement that is either true or false.  In our present case, where there is underlying uncertainty about the state of the world, it makes sense to think of a proposition as a Boolean function defined over the possible states of the world, so that the truth or falsity of $q$ depends on what state the world is in.  Of course, to a given proposition $q$ there is its {\em negation} $\neg q$; and, for two propositions $q_1$ and $q_2$, there is their {\em conjunction} $q_1 \land q_2$, which is true if and only if both $q_1$ and $q_2$ are true, and their {\em disjunction} $q_1 \lor q_2$, which is true if and only if at least one of $q_1$ and $q_2$ are true.  The mathematics do not require this, but, for our context, it helps to consider three different classes of propositions: hypotheses $h$ that we wish to assess, evidence $x$ (sometimes $e$) that describes observables, and background $b$ consisting of information available {\em a priori}.  Let $\prob$ denote a probability function defined over the set of propositions; to avoid measure-theoretic distractions, we assume that the space of propositions is countable, so it is just a matter of assigning mass to each proposition.  Then the Bayesian solution to the problem of induction is to use the conditional probability 
\[ h \mapsto \prob(h \mid x \land b) := \frac{\prob(h \land x \mid b)}{\prob(x \mid b)} \]
to draw conclusions about hypothesis $h$ based on given evidence $x$ and background $b$.  The idea is that evidence supporting---or at least consistent with---a hypothesis $h$ will amplify, boost, or drive up the probability $\prob(h \mid x \land b)$, thereby offering justification for inferring $h$ based on observation $x$.  

A common \citep[e.g.,][p.~64]{earman.bust} and seemingly compelling illustration of this amplification-based argument for probabilistic induction is in the case where a hypothesis $h$ entails $x$, i.e., $\prob(x \mid h \land b) = 1$; this is related to the rule of succession mentioned in Section~\ref{S:intro} and analyzed in detail in Section~\ref{S:succession} below.  In this case, 
\begin{equation}
\label{eq:amplify}
\prob(h \mid x \land b) = \frac{\prob(x \mid h \land b) \, \prob(h \mid b)}{\prob(x \mid b)} \geq \prob(h \mid b), 
\end{equation}
where the inequality follows because $\prob(x \mid h \land b) = 1$ and $\prob(x \land b) \leq 1$; typically, this latter inequality is strict, in which case the one in the above display is too.  This reveals the aforementioned amplification: the degree of belief in $h$ is greater after observing evidence $x$ entailed by $h$ than it was before.  So, presumably, more evidence consistent with $h$ implies greater degree of belief in $h$, thereby justifying induction.  

There is some trouble here, however, as we can see by probing the calculation in \eqref{eq:amplify} a bit further.  Indeed, if there are two hypotheses $h_1$ and $h_2$ that entail $x$, then 
\begin{equation}
\label{eq:amplify2a}
\frac{\prob(h_1 \mid x \land b)}{\prob(h_2 \mid x \land b)} = \frac{\prob(h_1 \mid b)}{\prob(h_2 \mid b)}. 
\end{equation}
That is, while conditioning on $x$ amplifies the probabilities of both $h_1$ and $h_2$, it does so in a way that preserves the {\em a priori} ranking of $h_1$ and $h_2$, i.e., 
\begin{equation}
\label{eq:amplify2b}
\prob(h_1 \mid x \land b) > \prob(h_2 \mid x \land b) \iff \prob(h_1 \mid b) > \prob(h_2 \mid b). 
\end{equation} 
No matter how strong evidence $x$ is, it does not have the power to change the {\em a priori} rankings assigned to the hypotheses it is entailed by.  \citet[p.~343]{popper1985.realism} says: 
\begin{quote}
[The result in Equation~\eqref{eq:amplify2b}] is shattering. It shows that the favourable evidence [$x$], even though it raises the probability according to [Equation~\eqref{eq:amplify2a}], nevertheless, against first impressions, leaves everything precisely as it was. It can never favour [$h_1$] rather than [$h_2$]. On the contrary, the order which we attached to our hypotheses before the evidence remains. It is unshakable by any favourable evidence. The evidence cannot influence it. 
\end{quote}
Popper goes on to explain why this result is ``shattering'' with two examples.  The first, with some obvious modifications of our own, is the familiar toy example where 
\begin{align*}
h_1 & = \text{all swans are white} \\
h_2 & = \text{all swans are black---except in Raleigh, where they are all white} \\
b & = \text{experiments are conducted exclusively in Raleigh}.
\end{align*}
Since $h_2$ is more flexible than $h_1$, one might be inclined to assign higher prior probability to $h_2$.  In that case, according to \eqref{eq:amplify2b}, no matter how many white swans are observed, the Bayesian can never favor $h_1$ over $h_2$, or even close the {\em a priori} gap between the two.  Popper's second example aims to emphasize that this is not just an issue in toy problems like the first example.  There he takes $h_1$ to be Newton's theory of gravitation, $h_2$ Einstein's, and $x$ the evidence available as of 1917, which was equally compatible with both theories; note that \eqref{eq:amplify2b} does not require that the evidence be entailed by both theories, only that it is equally compatible with both.  So, any Bayesian-derived preference between the two theories would be completely determined by prior beliefs, independent of the available evidence.  Popper further argues that such a situation is not special: for any $h_1$ and $x$, there exists a corresponding $h_2$ such that the evidence $x$ does not inform the preference between $h_1$ and $h_2$. 

Beyond the aforementioned issues with the probabilistic solution to the problem of induction, there are other more subtle concerns like that  illustrated in \citet{popper.miller.1983, popper.miller.1987}.  They start by factoring a hypothesis $h$ with respect to evidence $x$ as 
\begin{equation}
\label{eq:factor}
h = (h \lor \neg x) \land (h \lor x). 
\end{equation}
On the one hand, the second term in this factorization, namely $h \lor x$, is precisely what can be deduced concerning $h$ from observing $x$; that is, if $x$, then $h \lor x$.  On the other hand, the first term, $h \lor \neg x$, is the part of $h$ that cannot be deduced from $x$, which is precisely what induction aims to help assess.  Moreover, according to \citet[][p.~687]{popper.miller.1983}, ``each of the two factors is the weakest proposition strong enough in the presence of the other factor to entail the proposition $h$,'' so this is an efficient/expressive factorization of hypothesis $h$.  The key point is that, if probabilistic induction were possible, then any amplification or lift-off like that observed in \eqref{eq:amplify} should manifest itself as an increase in the probability of the to-be-induced part $h \lor \neg x$, from prior to posterior.  On the contrary, Popper and Miller showed that, for any $h$ and $x$, there is no such amplification and, moreover, there is instead a compression:
\begin{equation}
\label{eq:compress}
\prob(h \lor \neg x \mid x \land b) < \prob(h \lor \neg x \mid b). 
\end{equation}
Popper and Miller also show that $h \lor \neg x$ and $h \lor x$ are probabilistically conditionally independent, given $x$ (and $b$), which implies that 
\[ \prob(h \mid x \land b) = \prob(h \lor \neg x \mid x \land b) \, \prob(h \lor x \mid x \land b). \]
Therefore, in light of \eqref{eq:compress}, any amplification or lift-off on the left-hand side cannot be attributed to induction, or be interpreted as an inductive effect, since it is solely a consequence of an even greater amplification of the second factor on the right-hand side, which is purely deductive.  In summary, \citet[p.~688]{popper.miller.1983} write 
\begin{quote}
[The result in \eqref{eq:compress}] is completely devastating to the inductive interpretation of the calculus of probability. All probabilistic support is purely deductive: that part of a hypothesis that is not deductively entailed by the evidence is always strongly countersupported by the evidence---the more strongly the more the evidence asserts. This is completely general; it holds for every hypothesis
$h$; and it holds for every evidence [$x$], whether it supports $h$, is independent of $h$, or countersupports $h$. 
\end{quote}

Naturally, not everyone agrees with the Popper--Miller analysis and the corresponding ``shattering'' and ``completely devastating'' conclusions \citep[e.g.,][]{jeffrey1984, levi1984, good1984, redhead1985}.  Detailing these counterarguments is beyond the scope of the present paper but, suffice it to say that there is good reason to doubt that the probability calculus solves the problem of induction.  Technical details aside, there is another, far less controversial criticism of the probabilistic solution to the problem of induction.  That is, what makes a theory {\em good} is that it has ``high content,'' that it offers very specific explanations and/or predictions which are easy to test severely.  But theories that make very specific claims are often relatively simple, hence are unlikely to be true in any meaningful sense. For example, the probability assigned to the hypothesis ``the treatment and control groups have identical expected outcomes'' is often zero, independent of the evidence, solely by virtue of corresponding to a measure-theoretically small subset of unknowns. Conversely, the theories that are likely to be true (e.g., treatment and control groups have different expected outcomes) and hence might be assigned high probability are ``low content'' in the sense that they say very little, making vague or trivial predictions that cannot be tested severely.  Since the good theories sought must have low probability of being true, the critical rationalist must not seek theories with high probability, hence the probability assigned to hypotheses is irrelevant: ``Since we aim in science at high content, we do not aim at high probability'' \citep[][p.~386]{popper1962}. 

Popper famously advanced his own solution to the problem of induction, which is obviously not based on probabilistic reasoning.  The next section introduces a form of {\em possibilistic reasoning} which has close connection to Fisher's brand of inductive reasoning and, as we argue below, is both free of the issues with probabilistic induction highlighted above and is consistent with Popper's solution to the problem of induction.  

To summarize: there are some fundamental shortcomings with probabilism as it concerns induction, thereby motivating our investigation into alternative, non-probabilistic modes of uncertainty quantification.  It is also true that Bayesian methods give reasonable answers, at least to certain questions, in most applications.  These two points are not contradictory, since many different methods can ``work well'' while none of them are ``the foundationally correct solution.'' Our efforts here are motivated by the goal of achieving the latter which, of course, must also achieve the former.

\section{Possibilistic inferential models}
\label{S:im}

\subsection{Construction and interpretation}

This section describes the relatively new possibilistic IM framework as recently reviewed in \citet{imreview}.  Here we adopt more-or-less the same perspective taken and notation used in Section~\ref{S:induction}, so this looks a bit different than other presentations; again, this difference is just to highlight the broad scope in which this framework can be applied.  

The key difference between the perspective here and that in Section~\ref{S:induction} above is that here we do not assume existence of prior probabilities associated with the relevant hypotheses to be inferred.  More specifically, we take the perspective common in the statistical literature that prior information about the relevant hypotheses is {\em vacuous}.  Therefore, all that can be said {\em a priori} is that $\prob(h \mid b) \in [0,1]$ for all $h$, with the exception of those trivial hypotheses corresponding to absolute truth and falsity, for which the probability is necessarily 1 and 0, respectively.  More formally, prior knowledge would be described by a lower and upper probability pair $(\lPi, \uPi)$---in the sense of \citet{walley1991}, \citet{levi1980}, and others---with the property that, for all non-trivial hypotheses, $\lPi(h) = 0$ and $\uPi(h) = 1$. A challenge to Bayesian reasoning is that vacuous prior knowledge is too sticky in the sense that every hypothesis with {\em a priori} lower probability 0 has, under (generalized) Bayes updating, {\em a posteriori} lower probability 0 too, no matter what evidence is observed \citep[e.g.,][]{kyburg1987, gong.meng.update}.  But that Bayesian updating fails does not imply that other updating procedures fail too.  What we describe below is a novel, non-Bayesian updating of vacuous prior knowledge in light of observed evidence.  

Without prior probabilities for hypotheses, all that we can model is the evidence given hypotheses, i.e., $x \mapsto \prob(x \mid h \land b)$.  To simplify the notation, we write $\prob_h(x)$ for $\prob(x \mid h \land b)$, dropping the dependence on the background information $b$ and expressing dependence on $h$ as a subscript; this more closely resembles the ``parametric model'' notation familiar to statisticians.  Again, just to keep the presentation simple, suppose that the relevant spaces are finite/countable, so that $\prob_h(x)$ is a mass function for each $h$, i.e., $\sum_x \prob_h(x) = 1$.  Note, also, that the only values of $x$ that could possibly be observed, given the posited model, are those for which $\prob_h(x) > 0$ for some hypothesis $h$. 

Let $\H$ denote the collection of all relevant hypotheses, assumed still to be closed under negation and conjunction.  An important point is that there need not be a model $\prob_h$ for every $h \in \H$.  Instead, there is typically a set of ``elementary hypotheses'' $\H_\text{elem} \subset \H$ for which $\prob_h$ is defined, but inference might aim to go beyond $\H_\text{elem}$.  For example, in the familiar statistical setting, $\H_\text{elem}$ corresponds to simple hypotheses whereas $\H$ itself contains simple and composite hypotheses. It is important to emphasize that the elementary hypotheses are not trivial; in fact, these are the most important in the sense that they   offer direct explanation or prediction of outcomes, and they are the building blocks of all the relevant hypotheses.  Throughout, we reserve the symbol $h$ for general hypotheses in $\H$ and write $\eta$ for elementary hypotheses in $\H_\text{elem}$. 

For any $\eta \in \H_\text{elem}$, and evidence $x$, define the relative likelihood as 
\[ R(x,\eta) = \frac{\prob_\eta(x)}{\max_{\eta' \in \H_\text{elem}} \prob_{\eta'}(x)}. \]
Clearly, if $\hat \eta_x \in \arg\max_\eta \prob_\eta(x)$, then $R(x, \hat \eta_x) = 1$.  Next, define 
\begin{align}
\pi_x(\eta) & = \prob_\eta(\{e: R(e,\eta) \leq R(x,\eta)\}) \notag \\
& = \sum_{e} 1\{R(e,\eta) \leq R(x,\eta)\} \, \prob_\eta(e), \quad \eta \in \H_\text{elem}, \label{eq:contour}
\end{align}
to be the $x$-dependent {\em possibility contour} on $\H_\text{elem}$.  What makes this a genuine possibility contour in the sense of, e.g., \citet{dubois.prade.book}, is that $\pi_x(\hat \eta_x) = 1$.  This property is crucial as it relates to the imprecise-probabilistic interpretation of what follows and, in particular, its coherence properties \citep[e.g.,][]{walley1991, lower.previsions.book, miranda2008}, though we will not get into these details here; see \citet[][App.~D]{imreview}.  While other ranking functions besides the relative likelihood can be used, the choice of ranking function is not arbitrary---one needs to choose the ranking function in such a way to ensure that the normalization condition, $\max_\eta \pi_x(\eta) = 1$, is satisfied; otherwise, the corresponding ``possibilistic inference'' is incoherent in the above sense. 

Statisticians will recognize $\pi_x(\eta)$ as a p-value associated with a significance test at $\eta \in \H_\text{elem}$ with test statistic being the relative likelihood.  This forges a clear connection between the possibilistic IM solution and Fisher's brand of inductive inference as described in, for example, \citet{fisher1973}.  But this approach also has some interesting connections to both Hume and Popper.  First, while Hume has no solution to the problem of induction, no logical justification for why we can transfer past experience to the future, he admits that this can be and is frequently done, and his argument \citep[][Sec.~6]{hume.enquiry} is that the plausibility of $h$ in light of observation $x$ is directly related to the number of past instances where $x \land \eta$ occurs. That is, an accumulation of corroborating cases, or past experiences where $\eta \land x$ held, lead the mind to conclude that $\eta$ is (perhaps highly) plausible in light of a new case with evidence $x$.  Similarly, Popper defines a corroborating case to be one in which the hypothesis $\eta$ is exposed to a severe test and the evidence $x$ fails to refute $\eta$.  It is through an accumulation of many such corroborating cases that $\eta$ is likewise plausible---or tentatively accepted---in the sense that it has ``proved its mettle'' \citep[][p.~10]{popper1959}.  While both Hume and Popper have in mind that only {\em real}---rather than artificial or simulated---experiments or experiences contribute to the growth of knowledge, if the background information $b$ is genuine and if its translation into a probability model $\{\prob_\eta: \eta \in \H_\text{elem}\}$ (dependence on $b$ is suppressed in the notation) is acceptable, which are admittedly ``big ifs,'' then the weighted sum over different values of the evidence in \eqref{eq:contour} is a good proxy for those real experiences.  So, if we define a case $e$, where $R(e,\eta) \leq R(x,\eta)$, to be one that ``corroborates'' $h$ in light of observed $x$, then $\pi_x(\eta)$ in \eqref{eq:contour} is just the accumulation of corroborating cases, akin to what Hume and Popper offer as measures of the plausibility of $\eta$ given $x$.  

To get beyond the elementary hypotheses, it is essential that we can extend the function $\pi_x$ on $\H_\text{elem}$ to $\H$.  For this, we use the {\em extension principle} of possibility theory \citep[e.g.,][]{zadeh1975a}.  That is, we first define a new function $\uPi_x: \H \to [0,1]$ by 
\begin{equation}
\label{eq:upper}
\uPi_x(h) = \max_{\eta \in \H_\text{elem}: \eta \models h} \pi_x(\eta), \quad h \in \H, 
\end{equation}
where ``$\models$'' means ``entails,'' i.e., $\eta \models h$ if and only if $h \lor \neg\eta \equiv \top$, with $\top$ the tautologically true proposition.  The Choquet integral, studied in \citet{lower.previsions.book} and applied in \citet{imdec.ext}, provides extensions beyond upper probabilities to a broader collection of upper expectations; see Section~\ref{S:succession} for an example.

\subsection{Statistical properties}
\label{SS:stat.properties}

Central to the IM framework is a notion of reliability or {\em validity}.  In words, validity means that it is a rare event, with respect to the sampling distribution of evidence, that the IM assigns low possibility to a hypothesis that is true.  Mathematically, 
\begin{equation}
\label{eq:svalid}
\sup_{\eta \in \H_\text{elem}} \prob_\eta(\{x: \pi_x(\eta) \leq \alpha\})
\leq \alpha, \quad \alpha \in [0,1]. 
\end{equation}
That is, observing evidence $x$ that leads to a ``small'' possibility assignment to the elementary $h$ that actually caused it is an event with ``small'' probability.  That the two notions of ``small'' are tied together by a common threshold $\alpha$ is the calibration property necessary to give the possibility assignments a meaningful interpretation.  The result in \eqref{eq:svalid} is an immediate consequence of Fisher's probability integral transform \citep[e.g.,][Theorem~2.1.10]{casella.berger.book} applied to the sampling distribution of the relative likelihood as a function of evidence for a given hypothesis.  

The maxitivity of the possibility measure allows for the following immediate extension of the calibration property in \eqref{eq:svalid} to more general hypotheses:
\begin{equation}
\label{eq:valid}
\sup_{\eta \in \H_\text{elem}} \prob_\eta(\{x: \uPi_x(h) \leq \alpha \text{ for some $h$ with $\eta \models h$}\}) \leq \alpha, \quad
\alpha \in [0,1]. 
\end{equation}
Again, it is a rare event that the possibility assigned to {\em any} true hypothesis is small, where a hypothesis $h$ is ``true'' if and only if the elementary $\eta$ which causes the evidence entails it.  Note that this result is {\em uniform} in hypotheses, which is critical to our claim that the IM solution offers reliable uncertainty quantification in the broad sense that covers the gamut of hypotheses simultaneously.  Since the uniform control is stronger than pointwise control, of course, the corresponding pointwise result also holds:
\[ \sup_{\eta \in \H_\text{elem}: \eta \models h} \prob_\eta(\{x: \uPi_x(h) \leq \alpha\}) \leq \alpha, \quad \alpha \in [0,1], \quad h \in \H. \]
This latter result has parallels to classical hypothesis testing: if we test a hypothesis $h$ versus $\neg h$ using $x$ based on the rule ``reject $h$ if and only if $\uPi_x(h) \leq \alpha$,'' then the above property implies that the test has Type~I error no more than $\alpha$. 

The essential point behind the above validity properties is that the possibility assignments are calibrated with respect to the sampling distribution of the observed evidence, which means that we can control the probability of erroneous inferences.  More specifically, if we take Popper's refutation-based perspective on learning from experience, then the only conclusions that can be drawn are negative and based on a judgment that the possibility assigned to a hypothesis, given the observed evidence, is too small to warrant its further consideration.  Then \eqref{eq:valid} says that such a judgment is unlikely to be wrong, a guarantee that is necessary in order for one to feel confident in any instantiation of Popper's refutation-based solution to the problem of induction.  

As an aside, the property \eqref{eq:svalid} allows us to construct confidence sets based on the IM output.  Indeed, define 
\begin{equation}
\label{eq:conf_set}
C_\alpha(x) = \{\eta \in \H_\text{elem}: \pi_x(\eta) > \alpha\}, \quad \alpha \in [0,1]. 
\end{equation}
Then this defines a $100(1-\alpha)$\% confidence set in the sense that its coverage probability is at least $1-\alpha$ or, equivalently, its non-coverage probability is no more than $\alpha$:
\[ \sup_{\eta \in \H_\text{elem}} \prob_\eta(\{x: C_\alpha(x) \not\ni \eta\}) \leq \alpha. \]
A visualization of the confidence set in \eqref{eq:conf_set} is provided in Figure~\ref{fig:bin_contour} below. 

These important finite-sample guarantees do not come at the expense of the IM being overly conservative.  Indeed, \citet{imbvm.ext} prove a version of the celebrated Bernstein--von Mises theorem that establishes a form of ``large-sample Gaussianity'' with covariance matching the Cram\`er--Rao lower bound.  From this, one can conclude that possibilistic IMs are asymptotically efficient in the same sense as maximum likelihood estimators and Bayesian posterior distributions are asymptotically efficient; for more on the IM/Bayesian/fiducial connection, see \citet{martin.isipta2023, reimagined}. 

Finally, there is also an important counterfactual element to the evaluation of $\pi_x$ and to our proposed brand of possibilistic inference more generally.  Indeed, the contour $\pi_x$ and the corresponding possibility measure $\uPi_x$ depends not just on the actual evidence $x$ obtained but on the other evidence that could have been observed but was not.  This is important because, in our view---and also apparently in Fisher's, Mayo's, and others' views---one cannot reliably or efficiently determine if evidence is incompatible with a hypothesis just from the observed evidence alone: the other possible values of evidence that were not observed are also relevant.  This is related to the still-controversial likelihood principle \citep[e.g.,][]{birnbaum1962, bergerwolpert1984}, and it means that the famous criticism of \citet[][p.~385]{jeffreys1961}---``a hypothesis by that may be true may be rejected because it has not predicted observable results that have not occurred''---applies to our proposed IM solution.   \citet[][Exhibit~v, p.~168--170]{mayo.book.2018} debunks this criticism, and we refer the interested reader to her arguments.  Beyond Mayo's debunking, satisfying the likelihood principle while maintaining reliability requires sacrificing efficiency so, as \citet{martin.basu} argues, it is generally not desirable to satisfy the likelihood principle.  And when it may be desirable, e.g., when the experiment is not sufficiently well spelled out to know what other evidence might have been observed, extensions of the possibilistic IM solution are available and discussed in \citet[][Sec.~6]{martin.basu}.

\subsection{Structural properties}

The discussion in Section~\ref{S:induction} 
highlighted some criticisms against the Bayesian/probabilistic solution to the problem of induction.  Here we establish certain properties of the possibilistic IM solution that, in our interpretation, protect it against the aforementioned criticisms that negatively impact the Bayesian solution.  The IM solution's response to the specific Popper--Miller criticism will be addressed in Section~\ref{SS:pm} below.  

Then the first result shows that, in the basic form of the ``problem of induction,'' where a hypothesis entails the evidence, there is no amplification of the IM's upper probability, which is desirable.  That is, if we believed, {\em a priori}, that $h$ was fully possible, and if it entails the observed $x$, then there's no reason to doubt it and, therefore, it should remain fully possible.  Conversely, if $h$ entails $x$ or even if $h$ and $x$ are not contradictory, there is no amplification whatsoever unless $h$ is entailed by an elementary hypothesis. 

\begin{prop}
\label{prop:upi.noamp}
{\rm (a)} If $h$ entails $x$, and there exists an elementary hypothesis that entails $h$, then $\uPi_x(h)=1$. {\rm (b)} If $h$ and $x$ are not contradictory, but there is no elementary hypothesis that entails $h$, then $\uPi_x(h) = 0$. 
\end{prop}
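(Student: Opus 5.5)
Both parts follow directly from the extension-principle definition \eqref{eq:upper}, so the plan is to unpack the maximum over $\{\eta \in \H_\text{elem}: \eta \models h\}$ in each case, using only the normalization $\pi_x(\hat\eta_x) = 1$ and the model-support convention that $x$ is observable.

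For part (a), I would exhibit an elementary hypothesis $\eta \models h$ with $\pi_x(\eta) = 1$. Take any $\eta \in \H_\text{elem}$ with $\eta \models h$, which exists by assumption. Since $h \models x$, transitivity of entailment gives $\eta \models x$, so $\prob_\eta(x) = 1$. Hence $\prob_\eta(x) = \max_{\eta'} \prob_{\eta'}(x)$, i.e., $\eta$ is a maximizer $\hat\eta_x$, and $R(x,\eta) = 1$. Then every $e$ satisfies $R(e,\eta) \leq 1 = R(x,\eta)$, so \eqref{eq:contour} gives $\pi_x(\eta) = \sum_e \prob_\eta(e) = 1$. Because $\uPi_x(h) \le 1$ always (each $\pi_x \le 1$), the maximum in \eqref{eq:upper} equals $1$.

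The only delicate point is reading ``$\eta$ entails $x$'' as $\prob_\eta(x) = 1$, i.e., $x$ occurs with probability one under $\eta$. This is exactly the convention the paper uses in \eqref{eq:amplify}. I would state it explicitly so that $R(x,\eta)=1$ is justified. Even without it, the same conclusion holds whenever $\eta$ maximizes $\prob_{\cdot}(x)$, since normalization alone gives $\pi_x(\hat\eta_x)=1$.

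For part (b), the index set $\{\eta \in \H_\text{elem} : \eta \models h\}$ is empty by hypothesis. I would adopt the standard possibility-theory convention that a maximum over the empty set is $0$, which is the value assigned to the contradiction $\bot$. This yields $\uPi_x(h) = 0$ regardless of $x$. The non-contradiction of $h$ and $x$ plays no role in the computation. It is there only to stress that compatibility with the evidence, by itself, produces no amplification.

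So the main "obstacle" is purely definitional: making the empty-max convention and the meaning of entailment between a hypothesis and evidence explicit. Once those are fixed, no real estimate is needed.
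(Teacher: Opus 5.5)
Your proposal is correct and matches the paper's proof: in (a) you pass entailment from $\eta$ through $h$ to $x$ to get $R(x,\eta)=1$ and hence $\pi_x(\eta)=1$, and in (b) you use the convention that a maximum over the empty set equals $0$. Your two explicit remarks are clarifications the paper leaves implicit: reading $\eta \models x$ as $\prob_\eta(x)=1$, and noting that the non-contradiction hypothesis plays no role in (b).
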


\begin{proof}
(a) If there exists $\eta \in \H_\text{elem}$ such that $\eta$ entails $h$, then $\eta$ entails $x$ too.  Then $R(x,\eta)=1$, the maximum possible value, and probability that the relative likelihood is no more than its maximum possible value equals 1.  (b) If there is no such $\eta$, then the calculation of $\uPi_x(h)$ involves a maximum over an empty set, which equals 0. 
\end{proof}

Proposition~\ref{prop:upi.noamp}(b) is important and expected, but the succinct statement may not be transparent, so further details may be helpful. Consider the toy example above where, as part of our experiment, we visit a pond in Raleigh to check the color of swans and find that all the swans there are white.  Consider the hypothesis $h = \text{``all cats are black''}$, which is equivalent to ``creatures that are not black are not cats.''  Observing all white swans is consistent with this latter hypothesis in the sense that the two are not contradictory, but, following \citet{hempel1945.1}, one can ask if observing all white swans at the pond contribute anything to our knowledge about the color of cats?  Of course not, and the IM solution agrees with this intuition.  Indeed, the elementary hypotheses offer explanations, or (probabilistic) predictions, of what might be observed in the swan-seeking experiment, corresponding to different proportions of swans that are white versus not white.  A hypothesis about the color of cats cannot be constructed via conjunctions, disjunctions, or negations of elementary hypotheses about the proportions of white swans and, therefore, can have no {\em a priori} support or possibility.  That a hypothesis about cats can be compatible with swan observations should not be enough to change the {\em a priori} status of the former hypothesis, and Proposition~\ref{prop:upi.noamp}(b) establishes this. 

The next results concern the behavior of the IM's corresponding lower probability---or ``necessity measure'' as it is often called in the literature.  This is defined as 
\begin{equation}
\label{eq:lower}
\lPi_x(h) = 1 - \uPi_x(\neg h), \quad h \in \H.
\end{equation}
This so-called {\em conjugacy} property links the lower and upper probabilities together.  Without getting into details about the gambling context, for any $h$, $\uPi_x(h)$ represents the infimum price we would accept from another agent in exchange for making a commitment to pay him $1(h)$ dollars.  In the case of $\neg h$, making a commitment to pay the agent $1(\neg h)=1 - 1(h)$ dollar is equivalent to purchasing a commitment from him that pays us $1(h)$ dollar.  So the supremum price, $\lPi_x(h)$, that we would be willing to pay the other agent for a commitment to pay $1(h)$ dollar must be related to the infimum price we would be willing to accept to make such a commitment ourselves as in \eqref{eq:lower}.  That this supremum selling price is no less than the infimum buying price is clearly necessary in order for the assessment to be coherent, so $\lPi_x(h) \leq \uPi_x(h)$ and, hence, it makes sense to refer to these as lower and upper probabilities. 

A question is: when does amplification of the IM's lower probability occur, or, in other words, when does a degree of $\lPi_x$-lift-off obtains?  Again, that $h$ and $x$ are compatible---not contradictory---should not be enough for amplification.  We can only have genuine support for a hypothesis $h$ if there is no shred of doubt, no better or equally good alternative explanations for the observed evidence.

\begin{prop}
\label{prop:lpi.amp}
$\lPi_x(h) > 0$ if and only if every $\eta \in \H_\text{\rm elem}$ with $\eta \models \neg h$ has $\pi_x(\eta) < 1$.
\end{prop}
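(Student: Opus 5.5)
The argument is a direct unwinding of definitions \eqref{eq:lower} and \eqref{eq:upper}. By conjugacy, $\lPi_x(h) > 0$ holds exactly when $\uPi_x(\neg h) < 1$. The set of elementary hypotheses entailing $\neg h$ is $\{\eta \in \H_\text{elem}: \eta \models \neg h\}$, and by \eqref{eq:upper}
\[ \uPi_x(\neg h) = \max_{\eta \in \H_\text{elem}: \eta \models \neg h} \pi_x(\eta), \]
where $\neg h \in \H$ because $\H$ is closed under negation. So the claim reduces to showing that this maximum is strictly less than $1$ if and only if every term in it is strictly less than $1$.

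For the ``only if'' direction, suppose $\uPi_x(\neg h) < 1$. Every $\eta$ with $\eta \models \neg h$ then satisfies $\pi_x(\eta) \leq \uPi_x(\neg h) < 1$. For the ``if'' direction, suppose each such $\eta$ has $\pi_x(\eta) < 1$. We need the maximum to actually be attained, or at least the supremum to stay below $1$. Here I would use the structure of the contour \eqref{eq:contour}. The quantity $\pi_x(\eta)$ is the $\prob_\eta$-probability of $\{e: R(e,\eta) \leq R(x,\eta)\}$. Since the ranking function is the relative likelihood and the spaces are countable, $\pi_x(\eta) = 1$ holds when $R(x,\eta)=1$, i.e., when $\eta$ is a maximizer of the likelihood at $x$. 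The paper writes ``$\max$'' in \eqref{eq:upper}, so I will take the maximum to be attained, as in the paper's finite/countable setting. With attainment, the maximum equals $\pi_x(\eta^\star)$ for some $\eta^\star \models \neg h$, and this is $<1$ by hypothesis. The empty-set case is immediate: if no elementary $\eta$ entails $\neg h$, the maximum over the empty set is $0$, exactly as in the proof of Proposition~\ref{prop:upi.noamp}(b). Then $\lPi_x(h)=1>0$ and the condition holds vacuously.

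The only delicate step is attainment of the maximum when $\{\eta: \eta \models \neg h\}$ is infinite. Without it, $\pi_x(\eta)<1$ for all such $\eta$ does not by itself rule out $\sup \pi_x(\eta) = 1$. I would handle this by reading \eqref{eq:upper} literally as a maximum, as the paper defines it, and noting this convention explicitly. If needed, I would add that in the finite setting the assumption is automatic.
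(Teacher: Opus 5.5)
Your proof is correct and follows the paper's argument: conjugacy \eqref{eq:lower} turns $\lPi_x(h)>0$ into $\max_{\eta \models \neg h}\pi_x(\eta)<1$, and this holds exactly when every such $\eta$ has $\pi_x(\eta)<1$. The paper's proof passes silently over a real subtlety that you caught: the ``if'' direction needs the maximum in \eqref{eq:upper} to be attained, since otherwise a supremum over infinitely many $\eta$ could equal $1$. Reading it literally as a $\max$, as you do, is the same assumption the paper relies on implicitly.
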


\begin{proof}
By conjugacy \eqref{eq:lower}, 
\[ \lPi_x(h) = 1 - \uPi_x(\neg h) = 1 - \max_{\eta \in \H_\text{elem}: \eta \models \neg h} \pi_x(\eta). \]
So, clearly, $\lPi_x(h) > 0$ if and only if the maximum in the above display is less than 1.  This holds if and only if every $\eta \in \H_\text{elem}$ with $\eta \models \neg h$ has $\pi_x(\eta) < 1$.
\end{proof}

The next property is arguably the most important in differentiating possibilistic from probabilistic reasoning, related to the former's ``maxitivity'' compared to the latter's ``additivity.''  Monotonicity of uncertainty quantification is a very natural property: if one hypothesis entails another, then the probability/possibility of the latter should be no smaller than that of the former.  Importantly, a possibility need not be {\em strictly monotone}, i.e., $\uPi_x(h)$ might be equal to $\uPi_x(h')$ even when $h \models h'$, whereas a probability measure often is strictly monotone.  The practical relevance is related to the {\em false confidence theorem} of \citet{balch.martin.ferson.2017}---see, also, \citet{martin.nonadditive, martin.belief2024}---which says that probabilistic uncertainty quantification has a tendency to assign high posterior probabilities to certain hypotheses that are false and, therefore, not directly supported by the evidence.  The key idea is that a false hypothesis can be decomposed into (sometimes infinitely) many elementary components, all of which are false, but are not fully refuted and, therefore, have non-zero posterior probability.  Of course, the sum of many small probabilities can be not-small, leading to potentially erroneous inference.  The false confidence theorem shows that, for some false hypotheses, this potentially misleading behavior is quite common.  For the possibilistic IM, $\lPi_x$ can only increase when $h$ is changed to $h \lor h'$ if $h'$ contains something new that is a strictly better explanation of $x$ than any of the explanations consistent with $h$.  

\begin{prop}
\label{prop:monotone}
Given $h$, $\lPi_x(h \lor h') > \lPi_x(h)$ if and only if the best explanation $\eta \in \H_\text{\rm elem}$ of $x$ that entails $\neg h$ does not entail $\neg h \land \neg h'$.
\end{prop}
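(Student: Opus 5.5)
The plan is to reduce everything to a comparison of two maxima of the contour $\pi_x$, using conjugacy \eqref{eq:lower} and the extension principle \eqref{eq:upper}, as in the proof of Proposition~\ref{prop:lpi.amp}. First, note that $\neg(h \lor h') \equiv \neg h \land \neg h'$. Conjugacy then gives
\[ \lPi_x(h \lor h') - \lPi_x(h) = \uPi_x(\neg h) - \uPi_x(\neg h \land \neg h') = \max_{\eta \models \neg h} \pi_x(\eta) - \max_{\eta \models \neg h \land \neg h'} \pi_x(\eta), \]
where both maxima run over $\eta \in \H_\text{elem}$. The question is therefore when the second maximum is strictly smaller than the first.

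Next, compare the two index sets. Any elementary $\eta$ with $\eta \models \neg h \land \neg h'$ also satisfies $\eta \models \neg h$, so the second index set is a subset of the first. Hence the difference is always $\geq 0$; this is the monotonicity of $\lPi_x$.

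It is strictly positive if and only if no maximizer of $\pi_x$ over $\{\eta: \eta \models \neg h\}$ lies in the smaller set $\{\eta : \eta \models \neg h \land \neg h'\}$. I will read ``the best explanation $\eta$ of $x$ that entails $\neg h$'' as a maximizer $\eta^\star \in \arg\max_{\eta \models \neg h} \pi_x(\eta)$. Equivalently, I could use the maximizer of the relative likelihood $R(x,\cdot)$; I will check that the two coincide, or else simply adopt the $\pi_x$-based reading. I then argue each direction.

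\emph{If $\eta^\star \models \neg h \land \neg h'$:} it is feasible in the second maximum, so the two maxima agree and the difference is $0$.

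\emph{If $\eta^\star$ does not entail $\neg h \land \neg h'$:} every $\eta$ in the smaller set has $\pi_x(\eta) < \pi_x(\eta^\star)$, so the second maximum is strictly smaller. This step is cleanest when the index sets are finite, or the maxima are attained, since a supremum over the smaller set can then be bounded strictly below $\pi_x(\eta^\star)$.

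The main obstacle is making ``the best explanation'' well defined, namely attainment and uniqueness of the maximizer. If the maximizer over $\{\eta \models \neg h\}$ is not unique, the correct statement is that \emph{no} maximizer entails $\neg h \land \neg h'$, and I would state it that way. If the maximum is not attained, I would assume finiteness or attainment, in keeping with the paper's simplifying countable setting, and make that assumption explicit. Finally, I should handle the edge case where no elementary hypothesis entails $\neg h$: then both maxima are over empty sets and equal $0$, so there is no strict increase, consistent with the statement because no best explanation exists.
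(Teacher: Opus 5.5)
Your proposal is correct and is essentially the paper's proof: conjugacy turns the strict inequality into $\uPi_x(\neg h \land \neg h') < \uPi_x(\neg h)$. That is a comparison of the maximum of $\pi_x$ over $\{\eta \models \neg h\}$ with the maximum over its subset $\{\eta \models \neg h \land \neg h'\}$, and it is settled by where the maximizer lies.

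Your extra care is more precise than the paper, which tacitly assumes a unique, attained maximizer: with ties the right condition is that \emph{no} maximizer entails $\neg h \land \neg h'$, attainment is needed, and the empty-set case must be handled. You should also commit to the $\pi_x$-based reading of ``best explanation,'' because a maximizer of $R(x,\cdot)$ over $\{\eta \models \neg h\}$ need not maximize $\pi_x$ there, so the two readings can give different answers.
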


\begin{proof}
It is easy to see that 
\begin{align*}
\lPi_x(h \lor h') > \lPi_x(h) & \iff \uPi_x(\neg h \land \neg h') < \uPi_x(\neg h) \\
& \iff \max_{\eta \in \H_\text{elem}: \eta \models \neg h \land \neg h'} \pi_x(\eta) < \max_{\eta \in \H_\text{elem}: \eta \models \neg h} \pi_x(\eta). 
\end{align*}
The latter inequality holds if and only if the maximum on the right is attained at $\eta$ with $\eta \models \neg h$ and that this same $\eta$ has $\eta \not\models \neg h \land \neg h'$ or, in other words, if the best explanation $\eta \in \H_\text{\rm elem}$ of $x$ that entails $\neg h$ does not entail $\neg h \land \neg h'$.
\end{proof}

The point is that one cannot artificially manufacture support by just stitching together sufficiently many hypotheses that are mildly compatible with the evidence.  The hypotheses in question must be compatible with good explanations of $x$.  While the false confidence theorem says it is apparently relatively easy to find sufficiently many hypotheses that are both false and ``compatible'' with $x$ such that their accumulated posterior probability is large, it is much more challenging to find false hypotheses with {\em no good explanation} of $x$ in the true hypothesis that consists of the conjunction of their negations.  This explains why the possibilistic IM is safe from false confidence.

\subsection{Addressing the Popper--Miller criticism}
\label{SS:pm}

Next, we turn our attention to the particular induction-relevant proposition, namely, $h \lor \neg x$, in the factorization \eqref{eq:factor} of $h$ that was at the center of Popper and Miller's analysis.  Recall that, counter-intuitively, the Bayesian's conditional probability contracts on this induction-relevant proposition rather than amplifies, suggesting that Bayesian reasoning fails to solve the problem of induction.  We show below that the IM's assessment is free of the constraint that sets the Bayesian assessment back---that is, the proposition $h \lor \neg x$ can achieve the desired amplification, at least under certain very natural conditions.  

Before justifying this claim, we first need to {\em define} the IM's assessment of a proposition that involves statements about both unknowns and observables.  While the Bayesian formulation constructs a joint probability model for unknowns and observables, so that ``probability of $h \lor \neg x$'' makes sense, conditionally or unconditionally, the IM solution has no such joint distribution.  Indeed, we have only defined $(\lPi_x, \uPi_x)$ for hypotheses $h \in \H$ about unknowns via the extension principle in \eqref{eq:upper}.  So, the evaluation of something like ``$\lPi_x(h \lor \neg x)$'' requires a further extension beyond that in \eqref{eq:upper}.  This highlights a key difference between Bayesian and IM reasoning with respect to the role of conditioning.  

Towards this, let $q$ denote a generic proposition about unknowns and observables, e.g., ``the proportion of white swans observed in a particular pond today in Raleigh exceeds the proportion in all of North America.''  For a given piece evidence $x$ in the support of the model, define the $x$-slice, $q_x$, of $q$ as 
\[ q_x = \bigvee \{\eta \in \H_\text{elem}: \eta \land x \models q\}. \]
This is just the aggregation of those $\eta \in \H_\text{elem}$ for which $\eta \land x$ entails $q$, and hence is a member of $\H$.  Returning to the above example, if $q$ is ``the proportion of white swans observed in a particular pond today in Raleigh exceeds the proportion in all of North America,'' and if $x$ is the stated proportion, then $q_x$ corresponds to the interval $(x, 1]$ consisting of those population proportions that exceed the observed proportion $x$.  Then we define the extension of the upper probability $\uPi_x$ to a joint proposition $q$ as the usual upper probability assigned to the corresponding $x$-slice $q_x$:
\[ \uPi_x(q) := \uPi_x(q_x) = \max_{\eta \in \H_\text{elem}: \eta \land x \models q} \pi_x(\eta). \]
This form of extension aligns with the general notion of {\em focusing}, discussed in \citet[][Sec.~5.1]{martin.partial2} as an alternative to Bayesian-like {\em conditioning}. 

As a special case, consider a generic hypothesis $e$ about observables, potentially different from the observed evidence $x$.  Note, here, that $e$ may be more complex than a single evidence value---think of it as a proposition formed by conjunctions, disjunctions, and negations of various single evidence values.  Then 
\[ (h \lor \neg e)_x = \begin{cases} h & \text{if $x \not\models \neg e$} \\ \top & \text{if $x \models \neg e$}. \end{cases} \]
The factor that determines which of the two cases taken in the above display is a basic compatibility relationship between $x$ and $e$: if $x \models \neg e$, then $x$ and $e$ are incompatible, whereas if $x \not\models \neg x$, then they are compatible---or, not incompatible.  The point is that, when $x$ and $e$ are incompatible in the sense that $x \models \neg e$, then the proposition $h \lor \neg e$ can be deduced from $x$; it is only in the case of compatibility, with $x \not\models \neg e$, where there is an opportunity for induction.  In the former case, where $x \models \neg e$, we get 
\[ \lPi_x(h \lor \neg e) = \lPi_x(\top) = 1, \]
because the proposition is deduced logically.  In the latter case, where $x \not\models \neg e$, we get 
\begin{align*}
\lPi_x(h \lor \neg e) & = \lPi_x\{ (h \lor \neg e)_x \} \\
& = \lPi_x(h) \\
& = 1 - \uPi_x(\neg h) \\
& = 1 - \max_{\eta \in \H_\text{elem}: \eta \models \neg h} \pi_x(\eta). 
\end{align*}
Interestingly, this reveals that the induction-targeting lift-off that was shown by Popper and Miller to fail for Bayesian inference can be realized with IMs.  Given $h$ and $e$, if we are in the case $x \not\models \neg e$ where induction is needed, there is positive $\lPi_x$-lower probability for $h \lor \neg e$ if and only if
\[ \text{every $\eta$ with $\eta \models \neg h$ has $\pi_x(\eta) < 1$}. \]
Of course, if $e = x$, as Popper and Miller specifically consider, then $(h \lor \neg x)_x \equiv h$, since $x$ cannot be incompatible with itself, so deduction is out of reach.  But the case where $(h \lor \neg x)_x \equiv h$ was implicitly already covered in Proposition~\ref{prop:lpi.amp}. 

\begin{prop}
\label{prop:pm}
$\lPi_x(h \lor \neg x) > 0$ if and only if every $\eta \in \H_\text{\rm elem}$ with $\eta \models \neg h$ has $\pi_x(\eta) < 1$.
\end{prop}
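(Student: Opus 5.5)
The plan is to reduce the statement to Proposition~\ref{prop:lpi.amp} through the $x$-slice construction just introduced. By definition, the lower probability of a joint proposition is evaluated on its $x$-slice. Conjugacy \eqref{eq:lower} gives $\lPi_x(q) = 1 - \uPi_x(\neg q)$, with $\uPi_x(\neg q) = \uPi_x\{(\neg q)_x\}$. I will therefore first check that this is consistent with the display above, namely $\lPi_x(q) = \lPi_x(q_x)$ for $q = h \lor \neg x$.

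\textbf{Step 1: compute the slice $(h \lor \neg x)_x$.} Here $x$ is in the support of the model, so $x$ is not contradictory and $x \not\models \neg x$. For an elementary $\eta$, the conjunction $\eta \land x$ entails $h \lor \neg x$ exactly when $\eta \land x \models h$. In the setting of the paper, elementary hypotheses are statements about the unknowns only, and $h$ is also about the unknowns. Given that, this holds exactly when $\eta \models h$, provided $\eta \land x$ is consistent. Consistency holds whenever $\prob_\eta(x) > 0$. For $\eta$ with $\prob_\eta(x)=0$ the entailment is vacuous, but such $\eta$ have $R(x,\eta)=0$ and so $\pi_x(\eta)$ is not $1$ unless degenerate. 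The cleaner route is simply to invoke the case analysis displayed just before the statement, taking $e = x$: since $x \not\models \neg x$, we get $(h \lor \neg x)_x \equiv h$, as the text already asserts. The same reasoning applied to the negation $\neg(h \lor \neg x) = \neg h \land x$ gives $(\neg h \land x)_x \equiv \neg h$.

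\textbf{Step 2: reduce to Proposition~\ref{prop:lpi.amp}.} The chain of equalities displayed in the compatible case gives
\[ \lPi_x(h \lor \neg x) = \lPi_x(h) = 1 - \max_{\eta \in \H_\text{elem}: \eta \models \neg h} \pi_x(\eta). \]
The right-hand side is positive if and only if that maximum is strictly below $1$. By Proposition~\ref{prop:lpi.amp}, this is exactly the condition that every $\eta \models \neg h$ has $\pi_x(\eta) < 1$.

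\textbf{Main obstacle.} The only delicate point is Step 1: the slice identity has to hold even for elementary $\eta$ that make $x$ impossible, where $\eta \land x$ is contradictory and would vacuously entail anything. I would handle this by adopting the convention already implicit in the displayed case formula, namely that the slice ranges over $\eta$ compatible with $x$. Alternatively, one can note that such $\eta$ can be identified with their model content, and that they are not entailed differently by $h$ versus $\neg h$. In either case the maximum over $\{\eta : \eta \models \neg h\}$ is unaffected, so the equivalence in the statement follows.
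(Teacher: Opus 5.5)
Your proof is correct and follows the paper's route: use $x \not\models \neg x$ to get the slice identity $(h \lor \neg x)_x \equiv h$, conclude $\lPi_x(h \lor \neg x) = \lPi_x(h)$, and then invoke Proposition~\ref{prop:lpi.amp}. The obstacle you raise about $\eta$ with $\prob_\eta(x)=0$ resolves more simply than you state: for such $\eta$ we have $R(x,\eta)=0$, so $\pi_x(\eta)=0$ exactly (not merely ``not $1$ unless degenerate''), and including or excluding them from either slice cannot change the relevant maximum.
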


For a concrete example, consider the case in Laplace's sunrise example in Section~\ref{S:succession}, where the evidence $x$ consists of $n$ successes in $n$ Bernoulli trials.  As shown in Figure~\ref{fig:bin_contour}, there is an interval $[c,1]$ of $\eta$ values on which the contour is equal to 1; incidentally, $c=\frac{n-1}{n}$, but that is not relevant to the discussion here.  Then the aforementioned lift-off is achieved for any $h$ that corresponds to an interval that contains $[c,1]$.  So, while the Popper--Miller argument reveals that Bayesian solution necessarily experiences the counterintuitive and undesirable contraction phenomenon at propositions of the form $h \lor \neg x$, the same is not true for the IM solution: consistent with one's instinct/intuition, there are hypotheses $h$ for which amplification at $h \lor \neg x$ is achieved.  



\section{The rule of succession}
\label{S:succession}

\subsection{History}

The long-standing sunrise problem poses the question that, if one has seen the sunrise for the first time, how this person would reason the probability of the sun rising again the next day. And, after this person has seen the sunrise for many days and years, what would he conclude the probability of the sun rise yet again the next day? This question was raised in Laplace's 1774 article \citep{stigler.laplace} and his 1814 book \textit{Essai Philosophique sur les Probabilit\'es} \citep{laplace.essai}. This question was originally raised by Hume \citep[e.g.,][p.~173]{zabell.attribution}, and Laplace's answer \citep[][p.11]{laplace.essai} was as follows:
\begin{quote}
If we place the dawn of history at 5,000 years before the present date, we have 1,826,213 days on which the sun has constantly risen in each 24 hour period. We may therefore lay odds of 1,826,214 to 1 that it will rise again tomorrow.
\end{quote}
This triggered a long debate over whether his reasoning was sound, even though he did recognize that ``this number would be incomparably greater for one who, perceiving in the coherence (or totality) of phenomena the principle regulating days and seasons, sees that nothing at the present moment can check the sun's course'' \citep[][p.~11]{laplace.essai}. As Laplace himself states in this passage, and as \citet{zabell.attribution} clearly explains, Bayes, Price, and Buffon all provided slightly different versions of answers before Laplace, and the act of using past events and prior beliefs exemplified the use of induction to reason about the world in the Eighteenth century. 

\subsection{Setup}

Here we formalize the sunrise problem into a statistical setting. Let $X \sim \bin(n, \Theta)$ denote the number of successes in $n$ independent and identically distributed trials, with known number of trials $n$ and unknown success probability $\Theta$. If all $n$ observations are successes---meaning $X=n$---we have exactly the sunrise problem. To connect to the notation in Sections~\ref{S:induction} and \ref{S:im}, the evidence $x$ is the number of successes, the elementary hypotheses $\H_\text{elem}$ corresponds to the parameter space $[0,1]$ of the binomial model, the generic hypotheses $\H$ corresponds to the power set of $[0,1]$, and the background $b$ consists of contextual information stating that the experiment involves a sequence of $n$ many independent Bernoulli trials with a fixed-but-unknown success probability $\Theta$.  This is the meaning- or context-establishing model as described in Section~\ref{S:intro}. With respect to this context, which does not permit changes to the experiment from one trial to the next, an estimate of the success probability $\Theta$ can naturally be used for making predictions about success in the next trial. It is in this sense that estimation of $\Theta$ is related to induction.

When $x=n$, the maximum likelihood estimator is $\hat{\theta}_\text{\sc mle}=x/n=1$. Mathematically, this is uncontroversial, but the philosophical implications are problematic: concluding that the success probability is 1 implies predictions are certain and that induction has been realized, i.e., certain knowledge about the unknown $\Theta$ has been reached based on finite experience.  Since certain induction cannot be realized in finite experience, the method that leads to this conclusion must be flawed. 

Laplace's rule of succession \citep{laplace.essai, zabell1989.rule} can be viewed as an alternative to the aforementioned maximum likelihood solution.  This rule is based on Bayes's formula and an assumption that $\Theta \sim \unif(0,1)$ {\em a priori}.  More generally, if 
\[ (X \mid \Theta=\theta) \sim \bin(n,\theta) \quad \text{and} \quad \Theta \sim \bet(a,b), \]
then $(\Theta \mid X=x) \sim \bet(x+a, n-x+b)$, and, consequently, the Bayes estimator---the minimizer of the posterior risk with respect to squared-error loss---is
\[ \hat\theta_\text{\sc bayes}(x) = \E(\Theta \mid X=x) = \frac{x + a}{n + a + b}. \]
Following Laplace, if we take $a=b=1$, which corresponds to a uniform prior, then, in the case $x=n$, we get the Bayes estimator 
\[ \hat\theta_\text{\sc lap}(x) = \frac{n+1}{n+2}. \]
Notice that this is strictly less than 1 for every finite $n$, consistent with the understanding that no finite experience of all successes leads to a conclusion that success is certain, hence is free of the ideological dissonance associated with the maximum likelihood estimator.  Other, non-uniform choices of prior distribution can be used; for example, the prior recommended by \citet{jeffreys1946} corresponds to $a=b=\frac12$, which yields the estimator 
\[ \hat\theta_\text{\sc jef}(x) = \frac{n+1/2}{n+1}, \]
which is larger than $\hat\theta_\text{\sc lap}$ but also strictly less than 1 for all $n$. 

The maximum likelihood estimator, as the name suggests, is mainly interpreted as an estimator of the unknown $\Theta$.  In our binomial setting, such an estimator inherits an interpretation as a predictive probability for future trials, but this clearly does not properly account for the uncertainty associated with the estimation of $\Theta$.  An advantage of the Bayes estimator is that it can be directly interpreted as a genuine predictive probability, one that incorporates uncertainty about the underlying $\Theta$ through integration with respect to a posterior distribution.  Indeed, if $Y$ denotes the outcome of a future trial, e.g., whether sun rises tomorrow or not, then it can be shown that
\[ \hat\theta_\text{\sc bayes}(x) \equiv \prob_x(Y=1), \]
where $\prob_x$ is the posterior predictive distribution of $Y$, given $X=x$, with respect to the posited Bayes model.  This interpretation of the Bayes estimator as a predictive probability helps to explain why the Laplace and Jeffreys solutions stay strictly less than 1.  Despite these benefits, however, the fact that two different numerical answers can arise from ``noninformative'' priors with the same data sparks doubts---if both priors suggest ignorance about $\Theta$, should they not arrive at the same solution when applied to the same data? If the maximum likelihood, Laplace--Bayes, and Jeffreys estimators are all unsatisfactory, then apparently a new solution is needed.

\subsection{Possibilistic IM solution}

In the binomial model, the probability mass function is 
\[ p_\theta(x) = \binom{n}{x} \theta^x (1-\theta)^{n-x}, \quad x=0,1,\ldots,n, \quad \theta \in [0,1]. \]
When $X=x$ is observed, the maximum likelihood estimator is $\hat\theta_x = x/n$.  Then the relative likelihood is
\[
R(x,\theta) = \frac{p_\theta(x)}{p_{\hat\theta_x}(x)} = \left( \frac{n\theta}{x} \right)^x \left( \frac{n-n\theta}{n-x}\right)^{n-x}.
\]
The contour function, as shown in \eqref{eq:contour}, can be evaluated as
\begin{equation}
\label{eq:bin_contour}
\pi_x(\theta) = \sum_{y=0}^n 1\{ R(y, \theta) \leq R(x, \theta) \} \, p_\theta(y).
\end{equation}
Figure~\ref{fig:bin_contour} shows the contour function in the all-successes case where $x=n=10$. The maximization step is demonstrated by finding the possibility of the hypothesis $H=\{\Theta\in[0.7, 0.9]\}$ (the red segment): $\uPi_x(H) = \sup_{0.7 \le \theta \le 0.9} \pi_x(\theta) \approx 0.42$.  And since $H$ excludes those values of $\theta$ with $\pi_x(\theta)=1$, note that no lift-off is achieved in the sense that $\uPi_x(H)$ and the {\em a priori} lower probability assigned to $H$ agree and both are equal to 0, as predicted by Proposition~\ref{prop:lpi.amp}.  

\begin{figure}[t]
    \centering
    \includegraphics[width=0.57\linewidth]{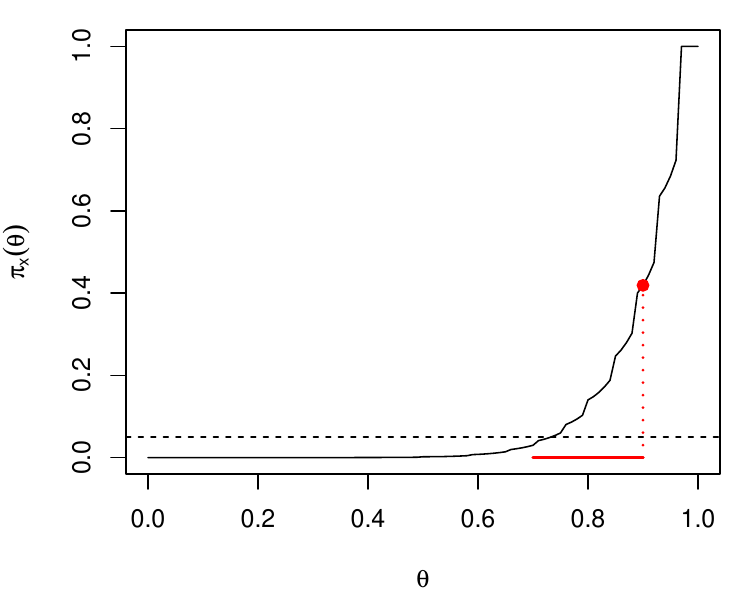}
    \caption{Contour function of a binomial distribution with $n=x=10$. The red line segment represents a hypothesis $h$ = ``$\Theta \in [0.7, 0.9]$,'' and the red point represents the possibility assigned to this hypothesis, $\uPi_x(h) \approx 0.42$. The horizontal dashed line at $\alpha=0.05$ determines the 95\% confidence set $C_{0.05}(n) = [0.733, 1]$ as in \eqref{eq:conf_set}.
    } 
    \label{fig:bin_contour}
\end{figure}

As hinted at in Section~\ref{S:im}, a major advantage of having a full-blown quantification of uncertainty is that one can extend the possibilistic IM beyond assessments of hypotheses, analogous to the extension of probabilities to expected values via Lebesgue integration.  Here, that extension boils down to the supremum of the ordinary expectation among those probability distributions dominated by the IM's possibility measure.  In many cases, including the present one, this complex upper expectation corresponds to a {\em Choquet integral} with a relatively simple form.  According to \citet[][Prop.~15.42]{lower.previsions.book}, for non-negative functions $f: \TT \to [0,\infty)$ (satisfying certain integrability conditions), the upper expectation of $f$, with respect to $\uPi_x$, denoted by $\uPi_x f$, is 
\begin{equation}
\label{eq:choquet}
\uPi_x f = \int_0^1 \sup_{\theta: \pi_x(\theta)>s} f(\theta) \, ds,
\end{equation}
where $\pi_x$ is the contour corresponding to $\uPi_x$. 

Choosing an action based only on a maximizer of the contour, such as the MLE, would discard the remaining uncertainty represented by $\uPi_x$. Following \citet{imdec.ext}, we instead choose an action by minimizing the upper expected loss in \eqref{eq:choquet}, analogous to classical von Neumann--Morgenstern decision theory. For a non-negative loss, $\ell_a$, where $\ell_a(\theta)$ represents the cost of taking action $a$ when the world is in state $\theta \in \TT$, this gives
\begin{align*}
\hat\theta_\text{\sc IM}(x) &= \arg\min_a \uPi_x \ell_a = \arg \min_a \int_0^1 \sup_{\theta: \pi_x(\theta)>s} \ell_a(\theta) \, ds. 
\end{align*}
This typically has no closed-form expression, even in the present binomial application with squared-error loss, but is relatively easy to find numerically. 

A key observation is that many of the appealing interpretations of the Bayesian solution also apply to the IM solution, but without requiring a commitment to a prior distribution.  Skipping over the technical details, it can be shown that $\hat\theta_\text{\sc im}(x)$ is the mean corresponding to a particular ``posterior distribution'' $\prior_x$ contained in the so-called {\em credal set} associated with $\uPi_x$; see \citet{imreview} for details about the IM credal sets.  This $\prior_x$ need not have the {\em likelihood-times-prior} form of a Bayesian posterior for any prior, hence the quotation marks around ``posterior.'' But being a $x$-dependent probability distribution for $\Theta$, it propagates forward to a predictive probability for $Y$, i.e., $\hat\theta_\text{\sc im}(x) = \prior_x(Y=1)$.  So, just like the Bayes estimators, and for exactly the same reasons, the IM estimator $\hat\theta_\text{\sc im}(x)$ is bounded away from unity and, therefore, free of the logical dissonance associated with maximum likelihood estimators.  Again, this superiority is realized without having to commit to any Bayesian prior distribution.

Figure~\ref{fig:laplace_IM} compares the IM estimator with the Laplace--Bayes and Jeffreys estimator in the all-successes ``$x=n$'' case over a range of values of $n$.  The Jeffreys estimator is the largest, the Laplace--Bayes estimator the smallest, and the IM estimator is generally in between the other two; at $n=2$, the Laplace--Bayes estimator and the IM estimator are equal (0.75) but everywhere else the IM estimator is larger.  While the numerical differences are small, this illustration provides a backdrop for a foundational comparison between these different approaches.  First, there seems to be no compelling justification for choosing a prior that shrinks the posterior mean towards $\frac12$ versus one that shrinks towards 0 or 1, as the Laplace--Bayes and Jeffreys solutions do, respectively.  Second, when no prior information is available, the solution should not proceed by manufacturing artificial knowledge for inclusion in the analysis.  Instead, use of a reliable quantification of uncertainty like the proposed IM would, even in the extreme case of all successes, allow for the *possibility* that $\Theta$ is less than 1. Therefore, any proper risk assessment, like that in \eqref{eq:choquet}, would naturally shrink the estimator away from the purely data-driven maximum likelihood estimator at the boundary without the need of any intervention (e.g., inclusion of artificial prior information) from the data analyst.  

\begin{figure}
    \centering
    \scalebox{0.7}{\includegraphics{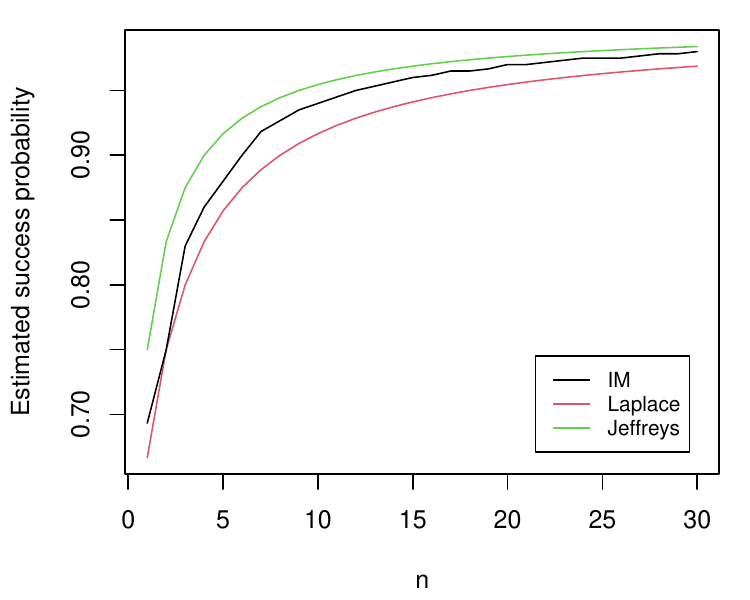}}
    \caption{Comparison of Laplace-Bayes, IM, and Bayes with Jeffreys prior estimators for $n$ ranging from $1$ to $30$.}
    \label{fig:laplace_IM}
\end{figure}

\section{Conclusion}
\label{S:discuss}

The present paper considers Hume's problem of induction through a possibility-theoretic inferential model (IM) lens.  What we found through this investigation is that, in addition to the desirable statistical properties enjoyed by the IM solution, which have been highlighted elsewhere in the literature \citep[e.g.,][]{imreview}, there are some appealing conceptual and philosophical properties as well.  In particular, we showed that the shortcomings of the probabilistic solution to the problem of induction to not apply to the IM solution and highlighted some of the structural benefits of the IM's possibilistic form.  We also proposed a new and principled solution to the sunrise problem of Laplace, one that corresponds to finding an action that minimizes an ``upper expected loss'' determined by the IM's uncertainty quantification, \`a la von Neumann and Morgenstern.  A Bayesian analysis would require some choice of prior distribution and, counterintuitively, even a choice between ``default'' or ``non-informative'' priors affects the Bayes estimator.  Aside from the IM's desirable statistical and philosophical properties that the Bayesian solution cannot match, the proposed estimator is superior, we argue, because the IM solution offers the appeal of a fully conditional Bayesian-like solution but without having to choose a prior---``default'' or otherwise. 

Induction and, more generally, the logic and philosophy of science are age-old considerations, but they are as important and relevant now than ever; see the new book {\em The Laws of Thought} \citep{griffiths.laws}, a modern twist on Boole's classic with the same title.  Indeed, artificial intelligence purports to create knowledge out of ``past experiences''---the data it was trained on.  As Hume and others have taught us, those inferences cannot be certain and, consequently, there is a pressing need for reliable methods for quantifying that uncertainty, to guide the growth of knowledge.  Our proposed possibilistic IM framework is a promising alternative to classical probabilistic reasoning, as we have shown here and elsewhere, but challenges remain.  In particular, the problem setup here in Section~\ref{S:im} assumes vacuous {\em a priori} knowledge.  Such an assumption is more reasonable than the opposite extreme that assumes knowledge can be encoded by a precise prior probability distribution, but is still quite unrealistic: if we believe that knowledge can grow and is growing, then presumably what we know today is different from what we knew yesterday; so even if knowledge was vacuous yesterday, it may no longer be so today.  Again, it cannot be assumed that today's updated knowledge can be described via precise probability, but a possibilistic description is more realistic. What is needed, then, is a reliable strategy for taking potentially non-vacuous {\em a priori} beliefs encoded as possibility and updating them to a new and more informed possibilistic quantification of uncertainty based on new evidence.  Novel developments along these lines have already been made \citep[e.g.,][]{martin.partial2}, but this is a focus of ongoing work.

\section*{Acknowledgments}

 This work is partially supported by the U.S.~National Science Foundation, through the grant DMS--2412628.

\bibliographystyle{apalike}
\bibliography{mybib}

@Article{balch.martin.ferson.2017,
  author  = {Balch, Michael S. and Martin, Ryan and Ferson, Scott},
  title   = {Satellite conjunction analysis and the false confidence theorem},
  journal = {Proc. Royal Soc. A},
  year    = {2019},
  volume  = {475},
  number  = {2227},
  pages   = {2018.0565},
}

@Article{basu1975,
  author   = {Basu, D.},
  journal  = {Sankhy\={a} Ser. A},
  title    = {Statistical information and likelihood},
  year     = {1975},
  issn     = {0581-572X},
  note     = {Discussion and correspondance between Barnard and Basu},
  number   = {1},
  pages    = {1--71},
  volume   = {37},
  fjournal = {Sankhy\={a} (Statistics). The Indian Journal of Statistics. Series A},
  mrclass  = {62A10},
  mrnumber = {440747},
}

@Article{bayes1763,
  author    = {Bayes, T.},
  journal   = {Philos. Trans. Roy. Soc.},
  title     = {An essay towards solving a problem in the doctrine of chances},
  year      = {1764},
  pages     = {370--418},
  volume    = {53},
}

@Book{berger.objective.book,
  author    = {Berger, J. and Bernardo, J. and Sun, D.},
  publisher = {World Scientific Publishing Co.},
  title     = {Objective {B}ayesian {I}nference},
  year      = {2024},
}

@Book{bergerwolpert1984,
  author     = {Berger, James and Wolpert, Robert},
  publisher  = {Institute of Mathematical Statistics},
  title      = {The {L}ikelihood {P}rinciple},
  year       = {1984},
  address    = {Hayward, CA},
  isbn       = {0-940600-06-4},
  series     = {Institute of Mathematical Statistics Lecture Notes---Monograph Series, 6},
  mrclass    = {62A15 (62A20)},
  mrnumber   = {MR773665},
  mrreviewer = {G. K. Robinson},
  pages      = {xi+206},
}

@ARTICLE{birnbaum1962,
  author = {Birnbaum, Allan},
  title = {On the foundations of statistical inference},
  journal = {J. Amer. Statist. Assoc.},
  year = {1962},
  volume = {57},
  pages = {269--326},
  fjournal = {Journal of the American Statistical Association},
  issn = {0162-1459},
  mrclass = {62.05},
  mrnumber = {0138176}
}

@Book{fisher.bio,
  author    = {Box, Joan Fisher},
  publisher = {John Wiley \& Sons, New York-Chichester-Brisbane},
  title     = {R. {A}. {F}isher. {T}he {L}ife of a {S}cientist},
  year      = {1978},
  isbn      = {0-471-09300-9},
  mrclass   = {62-03 (01A70 92-03)},
  mrnumber  = {500579},
  pages     = {xii+512 pp. (1 plate)},
}

@Book{casella.berger.book,
  author    = {Casella, George and Berger, Roger L.},
  publisher = {Wadsworth \& Brooks/Cole Advanced Books \& Software, Pacific Grove, CA},
  title     = {Statistical {I}nference},
  year      = {1990},
  isbn      = {0-534-11958-1},
  series    = {The Wadsworth \& Brooks/Cole Statistics/Probability Series},
  mrclass   = {62-01},
  mrnumber  = {1051420},
  pages     = {xviii+650},
}

@Book{deutsch.fabric,
  author    = {Deutsch, David},
  publisher = {Penguin Books},
  title     = {The {F}abric of {R}eality},
  year      = {1998},
  address   = {London},
  isbn      = {978-0140275414},
}

@Book{deutsch.infinity,
  author    = {Deutsch, David},
  publisher = {Penguin Books, New York},
  title     = {The {B}eginning of {I}nfinity},
  year      = {2011},
  isbn      = {978-0-14-312135-0},
  mrclass   = {81P05 (00A09 00A79)},
  mrnumber  = {2984795},
  pages     = {viii+487},
}

@Book{dubois.prade.book,
  title     = {Possibility {T}heory},
  publisher = {Plenum Press, New York},
  year      = {1988},
  author    = {Dubois, Didier and Prade, Henri},
  mrclass   = {90A05 (04A72 68P20 90-02 90C70 93C42 94D05)},
  mrnumber  = {1104217},
  pages     = {xvi+263},
}

@Book{earman.bust,
  author    = {Earman, John},
  publisher = {MIT Press, Cambridge, MA},
  title     = {Bayes or {B}ust?},
  year      = {1992},
  isbn      = {0-262-05046-3},
  mrclass   = {60A05 (03B48 62A15)},
  mrnumber  = {1170349},
  pages     = {xvi+272},
}

@Article{efron.cd.discuss,
  author   = {Efron, Bradley},
  title    = {Discussion: ``{C}onfidence distribution, the frequentist distribution estimator of a parameter: a review'' [MR3047496]},
  journal  = {Int. Stat. Rev.},
  year     = {2013},
  volume   = {81},
  number   = {1},
  pages    = {41--42},
  fjournal = {International Statistical Review. Revue Internationale de Statistique},
  mrclass  = {62F25 (62-02 62A01 62F10 62F15)},
  mrnumber = {3047498},
}

@Article{definetti1937,
  author   = {de Finetti, Bruno},
  title    = {La pr\'{e}vision : ses lois logiques, ses sources subjectives},
  journal  = {Ann. Inst. H. Poincar\'{e}},
  year     = {1937},
  volume   = {7},
  number   = {1},
  pages    = {1--68},
  fjournal = {Annales de l'Institut Henri Poincar\'{e}},
  issn     = {0365-320X},
  mrclass  = {DML},
  mrnumber = {1508036},
}

@Book{definetti.vol2,
  author    = {de Finetti, Bruno},
  publisher = {John Wiley \& Sons, Ltd., Chichester},
  title     = {Theory of {P}robability. {V}ol.\ 2},
  year      = {1990},
  isbn      = {0-471-92612-4},
  note      = {A critical introductory treatment, Translated from the Italian and with a preface by Antonio Mach{\`{\i}} and Adrian Smith, With a foreword by D. V. Lindley, Reprint of the 1975 translation},
  series    = {Wiley Classics Library},
  mrclass   = {60A05 (01A75)},
  mrnumber  = {1093667},
  pages     = {xviii+375},
}

@Book{definetti.vol1,
  author    = {de Finetti, Bruno},
  publisher = {John Wiley \& Sons, Ltd., Chichester},
  title     = {Theory of {P}robability. {V}ol.\ 1},
  year      = {1990},
  isbn      = {0-471-92611-6},
  series    = {Wiley Classics Library},
  mrclass   = {60A05 (01A75)},
  mrnumber  = {1093666},
  pages     = {xx+300},
}

@BOOK{fisher1973,
  title = {Statistical Methods and Scientific Inference},
  publisher = {Hafner Press},
  year = {1973},
  author = {Fisher, Ronald A.},
  pages = {viii+182},
  address = {New York},
  edition = {3rd},
  mrclass = {62-01},
  mrnumber = {MR0346955}
}

@Article{gong.meng.update,
  author   = {Gong, Ruobin and Meng, Xiao-Li},
  title    = {Judicious judgment meets unsettling updating: dilation, sure loss and {S}impson's paradox},
  journal  = {Statist. Sci.},
  year     = {2021},
  volume   = {36},
  number   = {2},
  pages    = {169--190},
  issn     = {0883-4237},
  fjournal = {Statistical Science. A Review Journal of the Institute of Mathematical Statistics},
  mrclass  = {62C10 (62A01 62C86)},
  mrnumber = {4255191},
}

@Article{good1984,
  author  = {Good, Irving J.},
  journal = {Nature},
  title   = {The impossibility of inductive probability},
  year    = {1984},
  number  = {2},
  pages   = {434},
  volume  = {310},
}

@Book{griffiths.laws,
  author    = {Tom Griffiths},
  publisher = {Henry Holt and Co.},
  title     = {The {L}aws of {T}hought: {T}he {Q}uest for a {M}athematical {T}heory of the {M}ind},
  year      = {2026},
  isbn      = {1250358353},
}

@Book{hacking.logic.book,
  author    = {Hacking, Ian},
  publisher = {Cambridge University Press, Cambridge-New York-Melbourne},
  title     = {Logic of {S}tatistical {I}nference},
  year      = {1976},
  mrclass   = {62AXX},
  mrnumber  = {0391307},
  pages     = {ix+232},
}

@Article{hempel1945.1,
  author    = {Hempel, Carl Gustav},
  journal   = {Mind},
  title     = {Studies in the logic of confirmation ({I}.)},
  year      = {1945},
  number    = {213},
  pages     = {1--26},
  volume    = {54},
  publisher = {Oxford University Press},
}

@InCollection{sep.induction,
  author       = {Henderson, Leah},
  booktitle    = {The {Stanford} Encyclopedia of Philosophy},
  publisher    = {Metaphysics Research Lab, Stanford University},
  title        = {{The Problem of Induction}},
  year         = {2024},
  edition      = {{W}inter 2024},
  editor       = {Edward N. Zalta and Uri Nodelman},
  howpublished = {\url{https://plato.stanford.edu/archives/win2024/entries/induction-problem/}},
}

@Book{hume.treatise,
  author    = {Hume, David},
  publisher = {Penguin Classics},
  title     = {A {T}reatise of {H}uman {Nature}},
  year      = {1739},
  note      = {1985 reprint},
}

@Book{hume.enquiry,
  author    = {Hume, David},
  publisher = {Hackett Publishing Company},
  title     = {An {E}nquiry {C}oncerning {H}uman {U}nderstanding},
  year      = {1748},
  address   = {Indianapolis},
  isbn      = {0915144166},
  note      = {1977 reprint},
}

@Article{jeffrey1984,
  author  = {Jeffrey, Richard C.},
  journal = {Nature},
  title   = {The impossibility of inductive probability},
  year    = {1984},
  number  = {2},
  pages   = {433},
  volume  = {310},
}

@Article{jeffreys1946,
  author   = {Jeffreys, Harold},
  journal  = {Proc. Roy. Soc. London Ser. A},
  title    = {An invariant form for the prior probability in estimation problems},
  year     = {1946},
  issn     = {0962-8444},
  pages    = {453--461},
  volume   = {186},
  fjournal = {Proceedings of the Royal Society. London. Series A. Mathematical, Physical and Engineering Sciences},
  mrclass  = {62.0X},
  mrnumber = {17504},
}

@Book{jeffreys1961,
  author    = {Jeffreys, Harold},
  publisher = {The Clarendon Press, Oxford University Press, New York},
  title     = {Theory of {P}robability},
  year      = {1998},
  isbn      = {0-19-850368-7},
  note      = {Reprint of the 1983 edition},
  series    = {Oxford Classic Texts in the Physical Sciences},
  mrclass   = {62-01 (01A75 03B48 62-02 62A01 62F15)},
  mrnumber  = {1647885},
  pages     = {xii+459},
}

@Article{kyburg1987,
  author   = {Kyburg, Jr., Henry E.},
  title    = {Bayesian and non-{B}ayesian evidential updating},
  journal  = {Artificial Intelligence},
  year     = {1987},
  volume   = {31},
  number   = {3},
  pages    = {271--293},
  issn     = {0004-3702},
  fjournal = {Artificial Intelligence. An International Journal},
  mrclass  = {60A05 (62A15 68T30 90D12)},
  mrnumber = {881287},
}

@Book{laplace,
  author    = {Laplace, Pierre S.},
  publisher = {Courcier},
  title     = {Th\'{e}orie {A}nalytique des {P}robabilit\'{e}s},
  year      = {1812},
  address   = {Paris},
}

@Book{laplace.essai,
  author    = {Laplace, Pierre S.},
  publisher = {Springer--Verlag},
  title     = {Philosophical {E}ssay on {P}robabilities},
  year      = {1995},
  address   = {New York},
  note      = {Translated from the fifth French edition of 1825 by Andrew I. Dale},
}

@Book{levi1980,
  author    = {Levi, I.},
  publisher = {The MIT Press},
  title     = {The {E}nterprise of {K}nowledge},
  year      = {1980},
  address   = {Boston},
}

@Article{levi1984,
  author  = {Levi, Isaac},
  journal = {Nature},
  title   = {The impossibility of inductive probability},
  year    = {1984},
  number  = {2},
  pages   = {433},
  volume  = {310},
}

@Article{martin.nonadditive,
  author   = {Martin, Ryan},
  title    = {False confidence, non-additive beliefs, and valid statistical inference},
  journal  = {Internat. J. Approx. Reason.},
  year     = {2019},
  volume   = {113},
  pages    = {39--73},
  fjournal = {International Journal of Approximate Reasoning},
  mrclass  = {62A01},
  mrnumber = {3979518},
}

@Unpublished{martin.partial2,
  author = {Martin, Ryan},
  note   = {{\tt arXiv:2211.14567}},
  title  = {Valid and efficient imprecise-probabilistic inference with partial priors, {II}. {G}eneral framework},
  year   = {2022},
}

@InProceedings{martin.isipta2023,
  author    = {Martin, Ryan},
  booktitle = {Proceedings of the Thirteenth International Symposium on Imprecise Probability: Theories and Applications},
  title     = {Fiducial inference viewed through a possibility-theoretic inferential model lens},
  year      = {2023},
  editor    = {Miranda, Enrique and Montes, Ignacio and Quaeghebeur, Erik and Vantaggi, Barbara},
  month     = {11--14 Jul},
  pages     = {299--310},
  publisher = {PMLR},
  series    = {Proceedings of Machine Learning Research},
  volume    = {215},
}

@InProceedings{martin.belief2024,
  author    = {Martin, Ryan},
  booktitle = {BELIEF 2024},
  title     = {Which statistical hypotheses are afflicted by false confidence?},
  year      = {2024},
  address   = {Switzerland},
  editor    = {Bi, Y. and Jousselme, A.-L. and Denoeux, T.},
  pages     = {140--149},
  publisher = {Springer Nature},
  series    = {Lecture Notes in Artificial Intelligence},
  volume    = {14909},
}

@Article{martin.basu,
  author  = {Martin, Ryan},
  journal = {Sankhya A},
  title   = {A possibility-theoretic solution to {B}asu's {B}ayesian--frequentist via media},
  year    = {2024},
  pages   = {43--70},
  volume  = {86},
}

@Article{imreview,
  author  = {Martin, Ryan},
  journal = {J. Amer. Statist. Assoc.},
  title   = {Possibilistic inferential models: a review},
  year    = {2026},
  number  = {553},
  pages   = {807--826},
  volume  = {121},
}

@Unpublished{reimagined,
  author = {Martin, Ryan},
  note   = {{\em Statist. Sci.}, to appear, with discussion; {\tt arXiv:2503.19748}},
  title  = {No-prior {B}ayes re{IM}agined: probabilistic approximations of possibilistic inferential models},
  year   = {2026},
}

@Article{imbasics,
  author   = {Martin, Ryan and Liu, Chuanhai},
  title    = {Inferential models: a framework for prior-free posterior probabilistic inference},
  journal  = {J. Amer. Statist. Assoc.},
  year     = {2013},
  volume   = {108},
  number   = {501},
  pages    = {301--313},
  fjournal = {Journal of the American Statistical Association},
  mrclass  = {62A01 (62F15 62G10)},
  mrnumber = {3174621},
}

@Book{imbook,
  author    = {Martin, Ryan and Liu, Chuanhai},
  publisher = {CRC Press, Boca Raton, FL},
  title     = {Inferential {M}odels},
  year      = {2015},
  series    = {Monographs on Statistics and Applied Probability},
  volume    = {147},
  mrclass   = {62-02 (62A01 62F15)},
  mrnumber  = {3618727},
  pages     = {xix+254},
}

@Article{imdec.ext,
  author  = {Martin, Ryan and Prim, Shih-Ni and Williams, Jonathan},
  journal = {Internat. J. Approx. Reason.},
  title   = {Decision-making with possibilistic inferential models},
  year    = {2026},
  pages   = {Paper No. 109720},
  volume  = {196},
}

@Article{imbvm.ext,
  author   = {Martin, Ryan and Williams, Jonathan P.},
  journal  = {Internat. J. Approx. Reason.},
  title    = {Asymptotic efficiency of inferential models and a possibilistic {B}ernstein--von {M}ises theorem},
  year     = {2025},
  pages    = {Paper No. 109389},
  volume   = {180},
  fjournal = {International Journal of Approximate Reasoning},
  mrclass  = {68T37 (62F15)},
  mrnumber = {4869120},
}

@Book{mayo.book.1996,
  title     = {{Error and the Growth of Experimental Knowledge}},
  publisher = {University of Chicago Press},
  year      = {1996},
  author    = {Mayo, Deborah G.},
  address   = {Chicago},
}

@Book{mayo.book.2018,
  author    = {Mayo, Deborah G.},
  publisher = {Cambridge University Press},
  title     = {{S}tatistical {I}nference as {S}evere {T}esting},
  year      = {2018},
  address   = {Cambridge},
}

@Article{miranda2008,
  author   = {Miranda, Enrique},
  title    = {A survey of the theory of coherent lower previsions},
  journal  = {Internat. J. Approx. Reason.},
  year     = {2008},
  volume   = {48},
  number   = {2},
  pages    = {628--658},
  issn     = {0888-613X},
  fjournal = {International Journal of Approximate Reasoning},
  mrclass  = {68T37 (03B48 60A05)},
  mrnumber = {2418677},
}

@Article{neyman1955,
  author   = {Neyman, Jerzy},
  journal  = {Comm. Pure Appl. Math.},
  title    = {The problem of inductive inference},
  year     = {1955},
  issn     = {0010-3640},
  pages    = {13--45},
  volume   = {8},
  fjournal = {Communications on Pure and Applied Mathematics},
  mrclass  = {60.0X},
  mrnumber = {68145},
}

@Article{neymanpearson1933,
  author  = {Neyman, J. and Pearson, Egon S.},
  title   = {On the problem of most efficient tests of statistical hypotheses},
  journal = {Phil. Trans. Roy. Soc. London Ser. A},
  year    = {1933},
  volume  = {231},
  pages   = {289--337},
}

@Book{popper1959,
  title     = {The {L}ogic of {S}cientific {D}iscovery},
  publisher = {Hutchinson and Co., Ltd., London},
  year      = {1959},
  author    = {Popper, Karl R.},
  mrclass   = {02.00 (60.00)},
  mrnumber  = {0107593},
  pages     = {480},
}

@Book{popper1962,
  author    = {Popper, Karl R.},
  publisher = {Routledge \& Kegan Paul},
  title     = {Conjectures and {R}efutations: {T}he {G}rowth of {S}cientific {K}nowledge},
  year      = {1963},
  address   = {London},
}

@Book{popper1985.realism,
  author    = {Popper, Karl R.},
  publisher = {Routledge},
  title     = {Realism and the {A}im of {S}cience},
  year      = {1985},
  address   = {London},
  isbn      = {9780415084000},
}

@Article{popper.miller.1983,
  author  = {Popper, Karl R. and Miller, David W.},
  journal = {Nature},
  title   = {A proof of the impossibility of inductive probability},
  year    = {1983},
  number  = {21},
  pages   = {687--688},
  volume  = {302},
}

@Article{popper.miller.1987,
  author   = {Popper, K. R. and Miller, D. W.},
  journal  = {Philos. Trans. Roy. Soc. London Ser. A},
  title    = {Why probabilistic support is not inductive},
  year     = {1987},
  issn     = {0080-4614},
  number   = {1562},
  pages    = {569--591},
  volume   = {321},
  fjournal = {Philosophical Transactions of the Royal Society of London. Series A. Mathematical and Physical Sciences},
  mrclass  = {03B48 (03A05 60A05)},
  mrnumber = {892293},
}

@Article{redhead1985,
  author   = {Redhead, Michael},
  journal  = {British J. Philos. Sci.},
  title    = {On the impossibility of inductive probability},
  year     = {1985},
  issn     = {0007-0882},
  number   = {2},
  pages    = {185--191},
  volume   = {36},
  fjournal = {The British Journal for the Philosophy of Science},
  mrclass  = {03B48 (03A05)},
  mrnumber = {915926},
}

@Book{russell.history,
  author    = {Russell, Bertrand},
  publisher = {George Allen and Unwin Ltd.},
  title     = {A {H}istory of {W}estern {P}hilosophy},
  year      = {1946},
  address   = {London},
}

@Article{stigler.laplace,
  author  = {Stephen M. Stigler},
  journal = {Statist. Sci.},
  title   = {Laplace's 1774 memoir on inverse probability},
  year    = {1986},
  pages   = {359--363},
  volume  = {1},
}

@Book{lower.previsions.book,
  title     = {Lower {P}revisions},
  publisher = {John Wiley \& Sons, Ltd., Chichester},
  year      = {2014},
  author    = {Troffaes, Matthias C. M. and de Cooman, Gert},
  series    = {Wiley Series in Probability and Statistics},
  mrclass   = {60-02 (60Axx 60B99 62H20 62M40 68T37)},
  mrnumber  = {3222242},
  pages     = {xviii+415},
}

@Book{venn.chance,
  author    = {Venn, John},
  publisher = {MacMillan},
  title     = {The {L}ogic of {C}hance},
  year      = {1876},
  address   = {London},
}

@Book{walley1991,
  title     = {Statistical {R}easoning with {I}mprecise {P}robabilities},
  publisher = {Chapman \& Hall Ltd.},
  year      = {1991},
  author    = {Walley, Peter},
  volume    = {42},
  series    = {Monographs on Statistics and Applied Probability},
  address   = {London},
  mrclass   = {62A99 (60A05 62C99)},
  mrnumber  = {1145491},
  pages     = {xii+706},
}

@Article{zabell1989.rule,
  author  = {Zabell, Sandy},
  journal = {Erkenntnis},
  title   = {The rule of succession},
  year    = {1989},
  number  = {2--3},
  pages   = {283--321},
  volume  = {31},
}

@Article{zabell.attribution,
  author  = {Sandy L. Zabell},
  journal = {Archive for History of Exact Sciences},
  title   = {{Buffon, Price, and Laplace: Scientific attribution in the 18th century}},
  year    = {1988},
  number  = {2},
  pages   = {173--181},
  volume  = {39},
}

@Article{zadeh1975a,
  author   = {Zadeh, L. A.},
  journal  = {Information Sci.},
  title    = {The concept of a linguistic variable and its application to approximate reasoning. {I}},
  year     = {1975},
  pages    = {199--249},
  volume   = {8},
  mrclass  = {68A30 (02C05 68A45)},
  mrnumber = {0386369},
}

@Book{imprecise.prob.book,
  title     = {Introduction to {I}mprecise {P}robabilities},
  publisher = {John Wiley \& Sons, Ltd., Chichester},
  year      = {2014},
  editor    = {Augustin, Thomas and Coolen, Frank P. A. and de Cooman, Gert and Troffaes, Matthias C. M.},
  series    = {Wiley Series in Probability and Statistics},
  mrclass   = {60-06 (60-01 60-02 60A05 60A86 62-06 62C86 91-06)},
  mrnumber  = {3236913},
  pages     = {xxviii+404},
}

\end{document}